\def\BibTeX{{\rm B\kern-.05em{\sc i\kern-.025em b}\kern-.08em
    T\kern-.1667em\lower.7ex\hbox{E}\kern-.125emX}}
\definecolor{dkgreen}{rgb}{0,0.6,0}
\definecolor{gray}{rgb}{0.5,0.5,0.5}
\definecolor{mauve}{rgb}{0.20 , 0.40, 1.0}
\lstdefinelanguage{Maxima}
{morekeywords={allbut, and, block, break, buildq, do, else, elseif, error, errcatch, for,   get, go, if, in, is, lambda, local, 
		matchdeclare, new, not, or, step, then, thru, tellsimp, tellsimpafter, unless, while, return},
	sensitive=false,
	comment=[s]{/*}{*/},
	morestring=[b]"
}
\newcommand{\wrighta}[3]{\ensuremath{ W \left(\left. #1,  #2 \right|  #3 \right) }}
\newcommand{\hsym}[2]{  \ensuremath{ \; _{#1}F_{#2} }  }
\newcommand{\ai}{ \ensuremath{ \operatorname{Ai} } }
\newcommand{\erf}[1]{\ensuremath{ \mathrm{erf} \left( #1 \right)   }}
\newcommand{\erfc}[1]{ \ensuremath{ \operatorname{erfc}\left( #1\right)  } }
\newcommand{\fclass}[2]{\ensuremath{  \mathbb{#1}^{\, #2} }}
\newtheorem{theorem}{Theorem}
\newtheorem{proposition}{Proposition}
\newtheorem{proof}{Proof}
\begin{document}

\title{The Wright function -- hypergeometric representation and symbolical evaluation\\
\thanks{Funded by the European Union Horizon Europe program}
}

\author{\IEEEauthorblockN{  Dimiter Prodanov\textsuperscript{1,2}}
	
\IEEEauthorblockA{1 -- \textit{EHS and NERF}, \\
	\textit{Interuniversity Microelectronics Centre (Imec)}\\
	Leuven, Belgium \\
	ORCID 0000-0001-8694-0535
}
\IEEEauthorblockA{2 -- \textit{ITSDP}, \\
\textit{Institute for Information and Communication Technologies (IICT), Bulgarian Academy of Sciences }\\
Sofia, Bulgaria
}

}

\maketitle
\begin{abstract}
The Wright function, which arises in the theory of the space-time fractional diffusion equation, is an interesting mathematical object which has diverse connections with other special and elementary functions. The Wright function provides a unified treatment of several classes of special functions, such as the Gaussian, Airy, Bessel, and Error functions,  etc. 
The manuscript demonstrates an algorithm for symbolical representation in terms of finite sums of hypergeometric (HG) functions and polynomials. The HG functions are then represented by known elementary or other special functions, wherever possible.  The algorithm is programmed in the open-source computer algebra system Maxima and can be used to for testing numerical algorithms for the evaluation of the Wright function. 
\end{abstract}

\begin{IEEEkeywords}
Wright function, hypergeometric function, Bessel function, Error function, Airy function, Gaussian function, Whittaker function
\end{IEEEkeywords}

\section{Introduction}\label{sec:intro}
The Wright function arises in the theory of the space-time fractional diffusion equation with the temporal Caputo derivative \cite{Gorenflo2000} and the fractional cable equation used in the modeling of apical dendrites of neurons\cite{Vitali2018}.
The eponymous function provides a unified  treatment of several classes of special functions, 
notably the Bessel functions,  the probability integral function $\operatorname{erf}$,  the Airy function \ai, and the Whittaker function, among others. 
It is also related to the derivatives of the Gaussian and the Airy functions. 
The function  was originally defined by the infinite series \cite{Wright1935}:
\[
W\left(a,b \middle| \ z\right) := \sum\limits_{k=0}^{\infty} \frac{z^k}{k! \, \Gamma (a k + b)}, \quad z, b \in \mathbb{C}, \quad a>-1
\]
where $\Gamma$ denotes the Euler's gamma function.
The function is classified into Wright function of the 1\textsuperscript{st} type if $a \geq 0$ and the Wright function of 2\textsuperscript{nd} type for $-1<a<0$ \cite{Mainardi2010a,Mainardi2020}.
The Wright function is closed with regards to differentiation
\begin{equation}\label{eq:wrightdiff}
	\frac{d}{dz} \wrighta{a}{b}{z} =  \wrighta{a}{a+ b}{z}
\end{equation}
and integration
\begin{equation}\label{eq:wrightint}
	\int \wrighta{a}{b}{z} dz = \wrighta{a}{b- a}{z}   + C
\end{equation}
Formulas for positive values of $a$ have been published in \cite{Miller1995,Gorenflo1999} and  have been derived through the Meijer G function representation. 
Recently, Apelblat and Gonzales-Santander \cite{Apelblat2021} have tabulated representations in terms of hypergeometric functions for many $a$ and $b$ combinations.
The present article extends the results of \cite{Apelblat2021} for the cases wherever $a<0$ and $b>1$ and also demonstrates how the domain of $a$ can be extended into the negative integers under certain conditions by explicitly constructing polynomial representations. 

\section{The canonical complex integral representation}\label{sec:intrep}

The canonical complex line integral representation of the Wright function is given by the  integral
\[
\wrighta{a}{b}{z}  = \frac{1}{2 \pi i} \int_{Ha^{-}} \frac{e^{\xi + z \xi^{-a}}  }{\xi^{b}} d \xi, \quad z \in \fclass{C}{}
\]
along a Hankel contour, which surrounds the negative real semi-axis (Fig. \ref{fig:hankel}). 
\begin{figure}[h!]
	\centerline{\includegraphics[width=0.5\columnwidth]{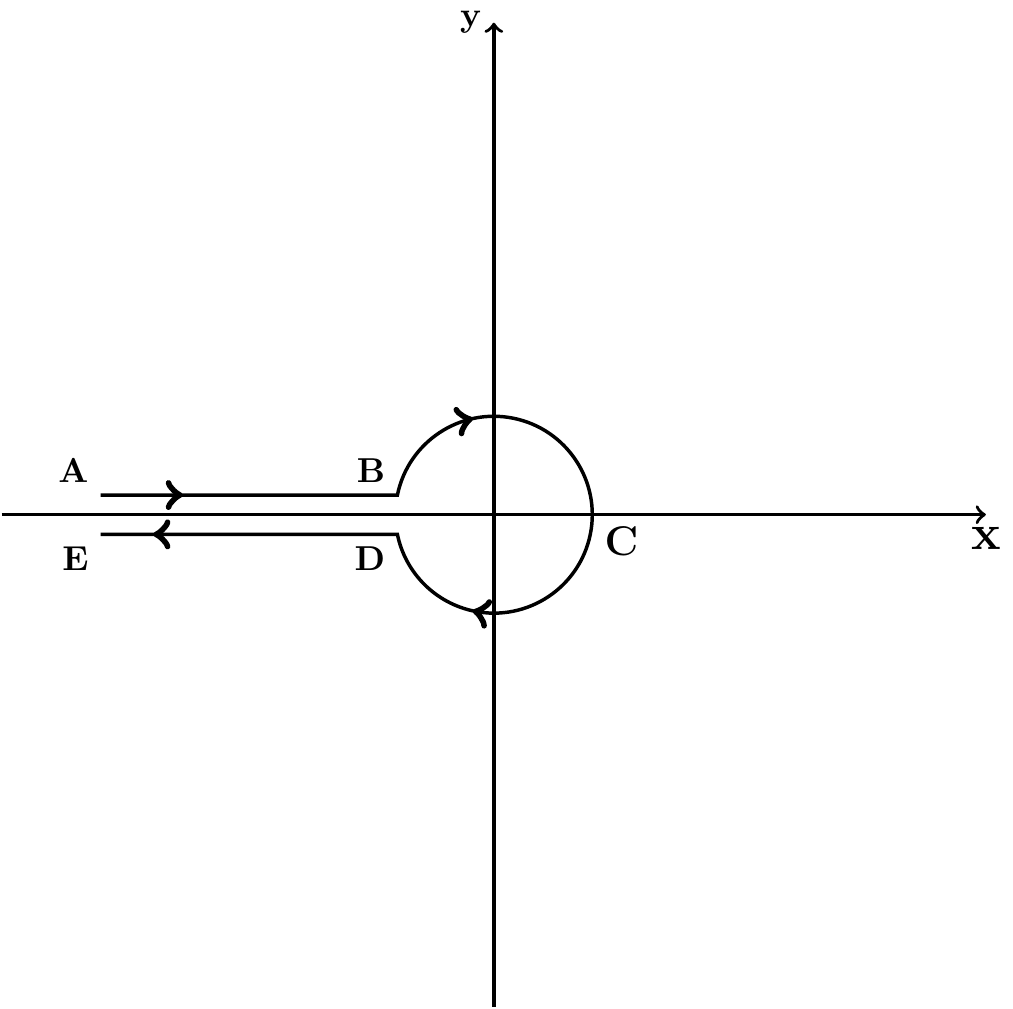}}
	\caption{Partition of the Hankel contour}
	\label{fig:hankel}
\end{figure}

\section{Polynomial reduction}\label{sec:poly}
For integral values of $b$ and $a $ the path of integration closes around the origin \textit{O}
\[
\wrighta{a}{b}{z} = \frac{1}{2 \pi i} \oint_{O} \frac{e^{\xi + z \xi^{-a}}  }{\xi^{b}} d \xi
\]
and can be used to extend the domain of the function into the negative integer parameters. 
In particular, let us consider the case when \textit{a} is a negative integer and denote it by $-n$.
Trivially, if \textit{b} is a negative integer, say $b=-m$, then the above integral vanishes and
$
\wrighta{-n}{-n}{z}=0
$.

In contrast, if $n=-a$ and $b=m, \quad m, n \in \fclass{N}{}$ then
\begin{multline*}
	\wrighta{-n}{m}{z} = \operatorname{Res} \left( ker (\xi), \xi=0\right) = \\ \frac{1}{\Gamma(m)} \left\lbrace \left( \frac{d}{d \xi}\right)^{m-1}  e^{\xi + z \xi^{n}} \right\rbrace \Bigg|_{\xi=0}
\end{multline*}
Therefore, we can conclude that \wrighta{-n}{m}{z} is a polynomial in $z$.
This is a novel result, which was not anticipated by Wright and allows for the extension of the domain of the parameters of the function. 
This polynomial can be computed explicitly by application of Fa\'a di Bruno's formula using the complete exponential Bell polynomials. 
For the natural numbers \textit{n} and \textit{m}:
\begin{equation}\label{eq:det}
	\wrighta{-n}{m}{z} =  \frac{1}{\Gamma(m)} B_{m-1}\left(g^\prime (0), \ldots ,  g^{(m-1)} (0) \right)
\end{equation}
where $ g(\xi)=\xi +z \xi^n$ is the exponent of the kernel and
\begin{multline*}
B_{m}\left(g^\prime (0), \ldots ,  g^{(m)} (0) \right) = \\
\left| 
\begin{array}{llcr}
	\binom{m-1}{0} g^\prime (0) & \binom{m-2}{1} g^{\prime\prime} (0)& \ldots & \binom{m-1}{m-1}  g^{(m)} (0) \\
	-1 & \binom{m-2}{1} g^{\prime} (0) &\ldots & \binom{m-2}{m-2}  g^{(m-1)} (0) \\
	0 & -1 &\ldots & \binom{m-3}{m-3}  g^{(m-3)} (0) \\
	\multicolumn{4}{c}{\ldots} \\
	0 & \ldots  & -1  & \binom{0}{0} g^\prime (0) 
\end{array} \right|
\end{multline*}

These expressions can be tabulated for some cases as follows.
For $m=1$ :
\[1,z+1\]
$m=2$ :
\[\frac{1}{2},\frac{2 z+1}{2},\frac{{{\left( z+1\right) }^{2}}}{2}\]
$m=3$ :
\[\frac{1}{6},\frac{6 z+1}{6},\frac{6 z+1}{6},\frac{{{\left( z+1\right) }^{3}}}{6}\]
$m=4$
\[\frac{1}{24},\frac{24 z+1}{24},\frac{24 z+1}{24},\frac{12 {{z}^{2}}+12 z+1}{24},\frac{{{\left( z+1\right) }^{4}}}{24}\]
$m=5$ :
\[\frac{1}{120},\frac{120 z+1}{120},\frac{120 z+1}{120},\frac{60 z+1}{120},\frac{60 {{z}^{2}}+20 z+1}{120},\frac{{{\left( z+1\right) }^{5}}}{120}\]

It should noted that the resulting matrix is a band matrix since already $ g^{\prime\prime} (0)=0$.
\section{Hypergeometric representation}\label{sec:hgrep}
Wherever the $a$ parameter is rational the Wright function can be represented by a finite sum of hypergeometric functions.
The Generalized Hypergeometric (GHG) function $pFq$ will be used under the notation
\begin{multline}
\hsym{p}{q}\left(a_0 \ldots a_{p-1}; b_0 \ldots b_{q-1} \Big| z \right) :=	\sum_{r=0}^{\infty} \frac{z^r}{r!} \frac{\prod_{j=0}^{p-1}\
(a_j)_r}{ \prod_{j=0}^{q-1}(b_j)_r} 
\end{multline}
where $(a)_r$ and $(b)_r$ will denote rising factorials and $(a)_0=1$, which assumes the normalization
$\hsym{p}{q}\left( \sim; \sim \Big| 0 \right) =1 $. 
By convention, equal parameters in the numerator and denominator will cancel out.
For positive, rational $a$ one could obtain the representation \cite{Apelblat2021}:
\begin{theorem}[First HG Representation]
	Suppose that $a=n/m >0$, where \textit{n} and \textit{m} are co-prime. Then
	\begin{multline}
		\wrighta{a=n/m}{b}{z} = \sum_{r=0}^{m-1} \frac{z^r}{r!\; \Gamma\left( b + a r \right) } \\ \hsym{1}{m+n}\left(1 ; b_0 \ldots b_{n-1}, c_0 \ldots c_{m-1} \Big| \frac{z^m}{n^n m^m} \right), 
	\end{multline}
	where
	\[
	b_j= r/m +(b+j)/n, \quad c_j= (r+1+j)/m
	\]
\end{theorem}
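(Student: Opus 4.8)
The plan is to expand the Wright function in its defining power series and to reorganise that series according to the residue class of the summation index modulo $m$. Writing $k = ms+r$ with $0\le r\le m-1$ and $s\ge 0$, and using $a=n/m$ so that $ak+b = ns + \tfrac{nr}{m} + b$, one obtains
\[
\wrighta{n/m}{b}{z} \;=\; \sum_{r=0}^{m-1} z^{r}\sum_{s=0}^{\infty}\frac{z^{ms}}{(ms+r)!\,\Gamma\!\left(ns+\tfrac{nr}{m}+b\right)}.
\]
Because the Wright series converges absolutely throughout $\fclass{C}{}$ (it is entire for $a>-1$), this rearrangement into a double sum is legitimate. It then suffices to show that, for each fixed $r$, the inner sum equals $\dfrac{1}{r!\,\Gamma(b+ar)}$ times the stated $\hsym{1}{m+n}$ evaluated at $z^m/(n^n m^m)$.

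First I would rewrite the two $s$-dependent denominators as Pochhammer symbols, namely $(ms+r)! = r!\,(r+1)_{ms}$ and $\Gamma\!\left(ns+\tfrac{nr}{m}+b\right) = \Gamma(b+ar)\,(b+ar)_{ns}$, using $ar=\tfrac{nr}{m}$. Then I would invoke the Gauss--Legendre multiplication formula for the Pochhammer symbol,
\[
(x)_{Ns} \;=\; N^{Ns}\prod_{j=0}^{N-1}\left(\frac{x+j}{N}\right)_{s},
\]
which is a direct consequence of the Gauss multiplication theorem for $\Gamma$. Applying it with $N=m,\ x=r+1$ produces $m^{ms}\prod_{j=0}^{m-1}(c_j)_s$ with $c_j=(r+1+j)/m$, and applying it with $N=n,\ x=b+ar$ produces $n^{ns}\prod_{j=0}^{n-1}(b_j)_s$ with $b_j = r/m + (b+j)/n$. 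The factor $m^{ms}n^{ns}$ combines with $z^{ms}$ to give $\bigl(z^m/(n^n m^m)\bigr)^{s}$.

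At this point the inner sum reads $\dfrac{1}{r!\,\Gamma(b+ar)}\displaystyle\sum_{s\ge 0}\dfrac{w^{s}}{\prod_{j}(b_j)_s\,\prod_{j}(c_j)_s}$ with $w = z^m/(n^n m^m)$. To match the paper's normalisation of the GHG function I would insert the identity $s! = (1)_s$, rewriting the summand as $\dfrac{w^{s}}{s!}\,\dfrac{(1)_s}{\prod_{j}(b_j)_s\prod_{j}(c_j)_s}$, which is exactly the series for $\hsym{1}{m+n}\!\left(1;b_0\ldots b_{n-1},c_0\ldots c_{m-1}\,\big|\,w\right)$. Summing over $r=0,\ldots,m-1$ completes the argument.

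The only genuine difficulty is bookkeeping: tracking which shift $j/N$ is attached to which parameter in the multiplication formula, and verifying that the powers of $m$ and $n$ assemble into exactly $m^m n^n$ per unit of $s$ (the relevant exponents being $ms$ and $ns$). It is worth recording the degenerate case $m=1$, where the outer sum collapses to the single term $r=0$ and the statement reduces to the classical $\hsym{1}{n}$ representation for $\wrighta{n}{b}{z}$; the hypothesis that $n$ and $m$ are co-prime is used only to put $a=n/m$ in canonical form and plays no role in the manipulations themselves.
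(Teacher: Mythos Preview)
Your proposal is correct and follows essentially the same route as the paper: both split the defining series by the residue class of the index modulo $m$ and then apply the Gauss--Legendre multiplication formula (the paper's Proposition~\ref{prop:gammamult}, your Pochhammer identity $(x)_{Ns}=N^{Ns}\prod_{j}((x+j)/N)_s$) to factor $(ms+r)!$ and $\Gamma(ns+ar+b)$ into the required $(b_j)_s$ and $(c_j)_s$. Your bookkeeping is in fact a bit cleaner than the paper's --- the paper writes the index split as $n=mp-q$ and reads the parameters off somewhat informally, whereas you carry the Pochhammer products through explicitly --- but the mathematical content is identical.
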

	The proof follows \cite{Miller1995} and is given as a staring point for the proof of the Second Representation Theorem. 
\begin{proof}
	Starting from
	\begin{multline*}
	\wrighta{a=n/m }{b}{z} = \sum_{n=0}^{\infty} \frac{z^n}{n! \ \Gamma (a n +b)} = \\
	\sum_{q=0}^{m-1}\sum_{p \geq q/m }^{\infty}  \frac{z^{ m p -q}}{\Gamma( m p -q +1) \ \Gamma (a ( m p -q) +b)}  
	\end{multline*}
	since the integer \textit{n} can be partitioned as $n= m p -q$, where  $q=0,  \ldots m-1$.
	After some algebra we obtain
	\begin{multline*}
	\wrighta{n/m}{b}{z} = \\
	  \frac{1}{\Gamma (b)}+ \sum_{r=1}^{m} z^r \sum_{p=0}^{\infty} \frac{z^{m p}}{ \Gamma ( ap + r a + b) \Gamma ( m p + r+1) }.
	\end{multline*}
	Observe that for $p=0$ the inner series coefficient is
	\[
	C_r= \frac{1}{\Gamma ( ra+ b) \Gamma (r+1) },
	\] 
	which serves a normalization factor.              
	Therefore, the series transforms as
	\begin{multline}\label{eq:wrightexp}
	\wrighta{n/m}{b}{z} =\\ 
	\sum_{r=0}^{m} \frac{z^r}{C_r} \cdot 
	\sum_{p=0}^{\infty}  \frac{ C_r}{\Gamma \left(n (p+r/m + b/n) \right)  }\frac{z^{m p}}{\Gamma \left(m (p + ( r+ 1)/m ) \right) }
\end{multline}
	Further, use Prop. \ref{prop:gammamult} to obtain
	\begin{multline*}
	\Gamma (b + a j ) = \Gamma \left( a \left( j + b/a \right)  \right) = \\
	 \Gamma (b) \left( \frac{b}{a} \right)_{j} \left( \frac{b+1}{a} \right)_{j} \ldots \left( \frac{b+a-1}{a} \right)_{j} a^{aj}.
	\end{multline*}
	From eq. \ref{eq:wrightexp} we read off
	\[
	b_0= \frac{r}{m} + \frac{b}{n}, \quad c_0 = \frac{r+1}{m}
	\]
	with increments $1/n$ and $1/m$, respectively. 
	Therefore, finally,
	\begin{multline*}
	\wrighta{a}{b}{z} = \sum_{r=0}^{m-1} \frac{z^r}{r!\; \Gamma\left( b + a r \right) } \cdot \\ \hsym{1}{m+n}\left(1 ; b_0 \ldots b_{n-1}, c_0 \ldots c_{m-1} \Big| \frac{z^m}{n^n m^m} \right).
	\end{multline*}
	Observe that for $r=m-1$ $c_1=1$ therefore, the GHG functions reduce to \hsym{0}{m+n-1}.
\end{proof}

The formula for a negative rational $a <0$ needs some more work. 
Suppose first that $b<1$. 
Let 
\begin{multline*} 
	\wrighta{-n/m}{b}{z} = \frac{1}{\Gamma (b)} + \\
	\sum_{r=1}^{m} \frac{z^r}{C_r} \sum_{p=0}^{\infty}  \frac{C_r}{\Gamma \left( -np-rn/m + b \right)  }\frac{z^{m p}}{\Gamma \left(m p + r+ 1  \right) }
\end{multline*}
First we use the Gamma reflection formula to obtain
\begin{equation}
\frac{1}{\Gamma \left( -np-rn/m + b \right) }= \frac{{{\left( -1\right) }^{n p}} \Gamma\left( \frac{n r}{m}+n p-b+1\right) }{\Gamma\left( b-\frac{n r}{m}\right) \Gamma\left( \frac{n r}{m}-b+1\right) }
\end{equation}
Therefore, 
\begin{multline*}
	\wrighta{-n/m}{b}{z} = \frac{1}{\Gamma (b)} + \\
	\sum_{r=1}^{m} \frac{z^r}{C_r} \sum_{p=0}^{\infty}   \frac{ (-1)^{n p} C_r \Gamma\left( \frac{n r}{m}+n p-b+1\right)  z^{m p}}{\Gamma\left( \frac{n r}{m}-b+1\right) \Gamma \left(m (p + ( r+ 1)/m ) \right) }
\end{multline*}
Finally, we read off the parameters
\[
b_j^\prime= 1-  (-r/m +(b+j)/n) = 1+ r/m - (b+j)/n .
\] 
Then we can formulate the following
\begin{theorem}[Second HG Representation]\label{th:rep2}
	For $ b \leq 1$ and $n \leq m$ non-negative co-prime integers
	\begin{multline}
		\wrighta{a=-n/m}{b}{z} = \sum_{r=0}^{m-1} \frac{z^r}{r!\; \Gamma\left( b + a r \right) } \\ \hsym{n+1}{m}\left(1 , b_0^\prime \ldots b_{n-1}^\prime; c_0 \ldots c_{m-1} \Big| \frac{(-)^n z^m}{n^n m^m} \right), 
	\end{multline}
	where 
	\[
	b_j^\prime=  1+ r/m - (b+j)/n  \quad c_j= (r+1+j)/m
	\]
\end{theorem}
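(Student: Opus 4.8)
The plan is to pick up exactly where the calculation preceding the statement leaves off. The residue-class decomposition $k = mp + r$ of the summation index together with one application of the Gamma reflection formula has already recast the defining series as
\begin{multline*}
\wrighta{-n/m}{b}{z} = \frac{1}{\Gamma(b)} + \\
\sum_{r=1}^{m} \frac{z^{r}}{\Gamma\!\left(b - \tfrac{nr}{m}\right)} \sum_{p=0}^{\infty} \frac{(-1)^{np}\,\Gamma\!\left(np + \tfrac{nr}{m} - b + 1\right) z^{mp}}{\Gamma\!\left(\tfrac{nr}{m} - b + 1\right)\,\Gamma(mp + r + 1)} ,
\end{multline*}
so the only task left is to recognise the inner $p$-series as a generalized hypergeometric function. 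I would first record that $\Gamma\!\left(b - \tfrac{nr}{m}\right) = \Gamma(b + ar)$ with $a = -n/m$, which already accounts for the prefactor $1/\Gamma(b+ar)$ demanded by the statement, while the remaining constant $1/\Gamma\!\left(\tfrac{nr}{m} - b + 1\right)$ is carried inside the $p$-sum.

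The core step is a double application of the Gauss--Legendre multiplication formula (Prop.~\ref{prop:gammamult}), just as in the proof of the First Representation, but now with one of the Gamma factors sitting in the numerator. Writing $\Gamma\!\left(np + \tfrac{nr}{m} - b + 1\right) = \Gamma\!\left(n\bigl(p + \tfrac{r}{m} + \tfrac{1-b}{n}\bigr)\right)$ and dividing by its $p = 0$ value factorises it, up to an explicit power of $n$, into a product of $n$ rising factorials $(b_j')_p$; reading off the bases $\tfrac{r}{m} + \tfrac{1-b+j}{n}$ and reindexing $j \mapsto n-1-j$ gives exactly $b_j' = 1 + \tfrac{r}{m} - \tfrac{b+j}{n}$. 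Symmetrically, $\Gamma(mp + r + 1) = \Gamma\!\left(m\bigl(p + \tfrac{r+1}{m}\bigr)\right)$ contributes, up to an explicit power of $m$ and the constant $\Gamma(r+1) = r!$, a product of $m$ rising factorials $(c_j)_p$ with $c_j = \tfrac{r+1+j}{m}$, identical to the First Representation. Gathering every factor that carries the exponent $p$ — namely $(-1)^{np}$, $z^{mp}$, and the two powers of $n$ and $m$ thrown off above — collapses them into the single hypergeometric argument appearing in the statement.

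It remains to fix the normalisation and identify the type. After the prefactors $1/\Gamma(b+ar)$ and $1/r!$ have been extracted, the surviving constant $1/\Gamma\!\left(\tfrac{nr}{m} - b + 1\right)$ cancels against the $p = 0$ value $\Gamma\!\left(\tfrac{nr}{m} - b + 1\right)$ of the numerator Gamma, so the inner series has leading term $1$. Inserting the trivial identity $(1)_p/p! = 1$ puts it in canonical form: the factor $(1)_p$ supplies the extra numerator parameter $1$, so together with the $n$ parameters $b_j'$ and the $m$ parameters $c_j$ the series is precisely of type $\hsym{n+1}{m}$, as claimed; the isolated term $1/\Gamma(b)$ is then simply the $r = 0$ member of the sum, which is why the index may be taken over $r = 0, \ldots, m-1$.

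The genuine content is not the algebra, which is inherited almost verbatim from the First Representation, but verifying that the hypotheses are exactly what makes the manipulation legitimate. The condition $b \le 1$ is needed so that the reflection formula is applied away from the poles of $\Gamma\!\left(b - \tfrac{nr}{m}\right)$ and of its reflection partner; the condition $n \le m$ (which, with co-primality, forces either $1 \le n < m$ or the degenerate $n = m = 1$, i.e.\ $a = -1$) controls the type $\hsym{n+1}{m}$, which converges for all $z$ when $n < m$ and only on a disc — requiring analytic continuation to recover the entire function — in the borderline case. One should also justify the interchange of the $r$- and $p$-summations, which is immediate from the absolute convergence of the original series for $a = -n/m > -1$, and keep track of the reindexing $j \mapsto n-1-j$ so that the parameters come out in the stated order. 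I expect this delineation of the admissible parameter range, rather than any single computation, to be the main obstacle.
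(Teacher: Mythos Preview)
Your proposal is correct and follows precisely the route the paper takes: it picks up the residue-class decomposition and the reflected Gamma expression that the paper derives immediately before stating the theorem, then applies the multiplication formula (Prop.~\ref{prop:gammamult}) to factor both Gamma functions into Pochhammer products, exactly paralleling the First Representation proof. You in fact supply more detail than the paper does---which simply announces the $b_j'$ without exhibiting the factorisation---and your analysis of why the hypotheses $b\le 1$ and $n\le m$ matter goes beyond what the paper records.
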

For $b\geq1$ a polynomial part $P$ is also added as follows.

\begin{theorem}[Third HG representation]\label{th:rep3}
	Suppose that \textit{a} and  \textit{b} are rational parameters and $b \geq 1 $  and $|a| \leq 1$.
	Define the polynomial $P_b(-a, z)$ by the integral recursion
	\begin{equation}
		P_b(-a, z)= \int_{0}^{z} P_{b-a}( -a, x) dx + c_{b-1},
	\end{equation}
	where $c_{b-1}=1/(b-1)!$ if $b$ is an integer and $0$ otherwise. 
	Furthermore, define $P_0 (z, -a) :=1$ and for $b < 0 $ assign $P_b(z, -a) :=0$ identically.
	Then
	\begin{multline}
		\wrighta{a=-n/m}{b}{z} = \sum_{r=0}^{m-1} \frac{z^r}{r!\; \Gamma\left( b + a r \right)} \cdot \\ 
		\hsym{n+1}{m}\left(1 , b_0^\prime \ldots b_{n-1}^\prime; c_0 \ldots c_{m-1} \Big| \frac{(-)^n z^m}{n^n m^m} \right) + P_b(-a, z)
	\end{multline}
	where \textit{m} and \textit{n} are co-prime numbers. 
\end{theorem}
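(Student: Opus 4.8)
The plan is to walk the parameter $b$ downward in steps of $\alpha:=-a=n/m$ until it enters the range $b\le 1$ governed by the Second HG Representation, carrying $P_b$ along as the accumulated ``constant of integration.'' In this notation \eqref{eq:wrightdiff} reads $\tfrac{d}{dz}\wrighta{a}{b}{z}=\wrighta{a}{b-\alpha}{z}$, and since $\wrighta{a}{b}{0}=1/\Gamma(b)$ it integrates (equivalently, \eqref{eq:wrightint}) to $\wrighta{a}{b}{z}=\int_{0}^{z}\wrighta{a}{b-\alpha}{x}\,dx+1/\Gamma(b)$. Write $S_b(z)$ for the finite hypergeometric sum appearing in the theorem's right-hand side, so that for $b\le 1$ one has $S_b(z)=\wrighta{a}{b}{z}$ by Theorem~\ref{th:rep2}. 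Two preliminary observations drive the argument. First, the recursion defining $P_b$ is equivalent to the pair $\tfrac{d}{dz}P_b(-a,z)=P_{b-\alpha}(-a,z)$ and $P_b(-a,0)=c_{b-1}$; moreover $c_{b-1}=1/(b-1)!$ coincides with $1/\Gamma(b)$ exactly when $b$ is a positive integer, and, since $b-k\alpha$ is a positive integer for only finitely many $k\ge0$, the finite ``resonant'' polynomial $R_b(z):=\sum_{k:\,b-k\alpha\in\mathbb{Z}_{\ge 1}}z^{k}/\bigl(k!\,\Gamma(b-k\alpha)\bigr)$ satisfies this same pair of conditions (shift $k\mapsto k+1$), whence $R_b=P_b$. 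Second — and this is the technical heart, taken up below — $\tfrac{d}{dz}S_b(z)=S_{b-\alpha}(z)$.

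Granting the second observation, the statement follows by induction on the least $K\ge0$ with $b-K\alpha<1$, the anchor $K=0$ being Theorem~\ref{th:rep2}. In the inductive step substitute $\wrighta{a}{b-\alpha}{z}=S_{b-\alpha}(z)+P_{b-\alpha}(-a,z)$ into the integrated form of \eqref{eq:wrightdiff} and use $\int_0^z P_{b-\alpha}(-a,x)\,dx=P_b(-a,z)-c_{b-1}$ together with $\int_0^z S_{b-\alpha}(x)\,dx=S_b(z)-S_b(0)$; the residual scalar $1/\Gamma(b)-c_{b-1}-S_b(0)$ is $0$, which is a short check once one keeps track — in the case $b\in\mathbb{Z}$ — of how the constant term $1/\Gamma(b)$ of the Wright series is apportioned between $S_b$ and the resonant polynomial $P_b$. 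This yields $\wrighta{a}{b}{z}=S_b(z)+P_b(-a,z)$.

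It remains to prove $\tfrac{d}{dz}S_b(z)=S_{b-\alpha}(z)$, which I would establish summand by summand. In the $r$-th summand of $S_{b-\alpha}$ the lower parameters are $c_j=(r+1+j)/m$ and the top parameter equals $1$; integrating $z^{r}$ against the series and invoking the elementary identity $1/(r+mp+1)=\tfrac{1}{r+1}\bigl((r+1)/m\bigr)_p/\bigl((r+1)/m+1\bigr)_p$ recasts $\int_0^z x^{r}\,\hsym{n+1}{m}(1,\dots;\dots\mid c x^{m})\,dx$ as an $\hsym{n+2}{m+1}$ whose newly created upper parameter $(r+1)/m$ cancels the old lower parameter $c_0=(r+1)/m$, while the newly created lower parameter $(r+1)/m+1=(r+1+m)/m$ slots in as the last lower parameter of the $(r+1)$-st summand at $b$. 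The Gamma prefactor matches because $(b-\alpha)-nr/m=b-n(r+1)/m$ (Gamma multiplication, Prop.~\ref{prop:gammamult}), and the list of upper parameters is unchanged for the same reason. Thus integration carries the $r$-th summand of $S_{b-\alpha}$ onto the $(r+1)$-st summand of $S_b$, with $r=m-1$ wrapping onto $r=0$ exactly as the Wright series itself wraps, the $p=0$ part of that $r=0$ summand being precisely $S_b(0)$; summing over $r$ gives $\int_0^z S_{b-\alpha}(x)\,dx=S_b(z)-S_b(0)$. The principal obstacle is exactly this bookkeeping — the upper/lower parameter cancellations, the wrap-around of the index $r$, and the reason $P_b$ becomes indispensable once $b\ge1$: the contributions with $b-nr/m$ a positive integer, where the $r$-th hypergeometric summand degenerates (its reduced series terminating as a polynomial), must be removed from the hypergeometric part, and one verifies these are exactly what the recursion deposits into $P_b$.
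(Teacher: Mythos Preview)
Your argument is sound and takes a genuinely different route from the paper. The paper does not induct on $b$ or invoke Theorem~\ref{th:rep2} as a base case; instead it returns to the Hankel integral, parametrises the small arc $BCD$ by $\xi=\epsilon\,\eta^{m}$, expands the kernel as a double series, and isolates the $\epsilon$-independent (``scale-invariant'') terms. Evaluating those residues gives the polynomial coefficients in closed form,
\[
c_{j}=\frac{z^{(j-b+1)/(1-a)}}{\bigl((j-b+1)/(1-a)\bigr)!\,\bigl((-aj+b-1)/(1-a)\bigr)!}\,,
\]
and finiteness of the admissible index set shows the arc contributes a polynomial; the recursion for $P_{b}$ is then read off by differentiating this explicit $c_{j}$ and matching against Prop.~\ref{prop:inteq1}. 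What each approach buys: the contour computation delivers the monomials of $P_{b}$ explicitly without induction and makes the geometric origin of the constraint $|a|\le 1$ visible, whereas your purely algebraic reduction avoids complex analysis altogether and identifies $P_{b}$ transparently with the finite ``resonant'' sub-series $R_{b}$ on which the Gamma-reflection step behind Theorem~\ref{th:rep2} fails---which is exactly how the Maxima code (skipping summands with $a r+b\in\mathbb{Z}$) is organised. The price you pay is the wrap-around and degeneracy bookkeeping for $\tfrac{d}{dz}S_{b}=S_{b-\alpha}$, which the paper never needs; note in particular that your constant check $1/\Gamma(b)=c_{b-1}+S_{b}(0)$ only balances once the summands with $b+ar\in\mathbb{Z}$ are excised from $S_{b}$, a convention implicit in the code but not in the displayed formula.
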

\begin{proof}
	First we prove that the arc integral results in a polynomial in \textit{z}.
	Suppose that $b \geq 1 $ is rational and $-a=n/m$ as before. 
	Consider the arc BCD. 
	We change variables as $\xi= \epsilon \eta^m $.
	Then the integral becomes
	\[
	I= m \; \oint_{BCD}\frac {d\eta \; {\epsilon}^{1-b}} {{\eta}^{ (b -1) m+1}}  {{e}^{{{\epsilon}^{\frac{n}{m}}} {{\eta}^{n}} z+\epsilon {{\eta}^{m}}}}
	\]
	Development  of the kernel in infinite series results in 
	\[
	ker= m\,\frac{d\eta \; {\epsilon}^{1-b}}  {{\eta}^{ (b -1) m+1}}  \sum_{j=0}^{\infty }{\left. \sum_{i=0}^{j}{\left. \frac{{{\epsilon}^{\frac{i m+\left( j-i\right)  n}{m}}} {{\eta}^{i m+\left( j-i\right)  n}} {{z}^{j-i}}}{i{!} \left( j-i\right) {!}}\right.}\right.}
	\]
	The scale-invariant part of the series is given by the members $k_{j}$ for which
	$
	{{\epsilon}^{\frac{i m+\left( j-i\right)  n}{m}-b+1}}=1
	$.
	This is given by the constraint
	\[
	i=\frac{\left( b-1\right)  m-j n}{m-n}
	\]
	Therefore,
	\[
	k_{j}=\frac{  d\eta \; m\;   {z}^{\frac{\left( j-b+1\right)  n}{n-m}}}{\eta \left( \frac{\left( b-1\right)  n-j m}{n-m}\right) {!} \left( \frac{\left( j-b+1\right)  n}{n-m}\right) {!}}\]
	Changing again variables to $\eta= e^{i \varphi/m}$ results in the integral
	\begin{multline*}
		c_j=\frac{1}{2 \pi i}
		\frac{i  {{z}^{\frac{\left( j-b+1\right)  m}{m-n}}}   }{\left( \frac{\left( b-1\right)  m-j n}{m-n}\right)  {!} \left( \frac{\left( j-b+1\right)  m}{m-n}\right)  {!}} \cdot \\
		 { \int_{-\pi}^{\pi}  {{e}^{\frac{i \left( b n-1\right)  \varphi}{n}+\frac{i \varphi}{n}-i b \varphi}} d \varphi}  =  \frac{  {{z}^{\frac{\left( j-b+1\right)  m}{m-n}}}   }{\left( \frac{\left( b-1\right)  m-j n}{m-n}\right)  {!} \left( \frac{\left( j-b+1\right)  m}{m-n}\right)  {!}} 
	\end{multline*}
	Furthermore, the valid indices are given by the set
	\[
	j: \quad \left(  \frac{\left( j-b+1\right)  m}{m-n} \in \fclass{N}{}  \right)  \cup \left( \frac{\left( b-1\right)  m-j n}{m-n} \in \fclass{N}{} \right)
	\]
	Equivalently, in the a-notation
	\begin{equation}\label{eq:coeffa}
	c_j=\frac{{z}^{\frac{j-b+1}{1-a}}}{ \left( \frac{j-b+1}{1-a}\right)  {!} \left( \frac{-a j+b-1}{1-a}\right)  {!} }
	\end{equation}
	Therefore, $a <1$ must hold for $c_j$ not to vanish. 
	
	On the other hand,
	\[
	b-1 \leq j \leq (b-1) /a \quad \cup \quad j \in \fclass{N}{}
	\]
	which is a finite set. Therefore,  for a rational \textit{b} the integral \textit{I} is a polynomial in \textit{z}.
	
	To derive the polynomial recursion we proceed as follows. 
	By Prop. \ref{prop:inteq1}
	\begin{equation}\label{eq:intpol1}
		\wrighta{-a}{b}{z} = \int_{0}^{z} \wrighta{-a}{b-a}{z} + \frac{1}{\Gamma(b)}
	\end{equation}
	so that the equation defines a recursion relationship.  
	
	Observe that for $j=b-1$ the coefficient becomes
	\[
	c_{b-1}= \frac{1}{ (b-1)!}
	\] 
	Therefore, for non-integer $b$ there are no constant monomials.
	Furthermore, consider the monomial $c_j$ as a function of $b$.
	Differentiating eq. \ref{eq:coeffa} one obtains the recursion
	\begin{multline*}
	\frac{d}{dz} c_j (b)=\frac{{z}^{\frac{j-b+1}{1-a} -1}}{ \left( \frac{j-b+1}{1-a} -1\right)  {!} \left( \frac{-a j+b-1}{1-a}\right)  {!} } =  \\
	\frac{{{z}^{\frac{j- (b-a)}{1-a}}}}{\left( \frac{j-(b-a)}{1-a}\right)  {!} \left( \frac{a j-b+1}{a-1}\right)  {!}  } =  c_{j-1} (b-a),
	\end{multline*}
	which is also consistent with the integral eq. \ref{eq:intpol1}.
	Therefore, the polynomial $P_b(-a, z)$ should obey the above recursion. 
	The second argument of the Wright function mutates and therefore it is convenient that it indexes the polynomial.
\end{proof}
For integer values of $a$, that is, when $m=1$ Th. \ref{th:rep3} corresponds with the polynomial representation since the  hypergeometric sum disappears.

It should be remarked that The Second and Third Representation theorems could not be traced to available literature. In particular, they are not stated in \cite{Gorenflo1999,Mainardi2001,Apelblat2021,Mainardi2022,Mainardi2020,Povstenko2021}.

\section{Implementation and Examples}\label{sec:examples}
The theorems have been implemented as functionality in the Computer Algebra System (CAS) Maxima.
The main functionality is presented in Listing \ref{sec:code} as the function \textrm{wrightsimp}.
Maxima has a hypergeometric reduction functionality by the function \textrm{hgfred}, which allows for seamless integration of so-presented theory \cite{Avgoustis1977}. 
This allows for calculation of potentially a very large number of derivative formulas.
It is possible to compute arbitrarily long decomposition but in view of the limited column space only denominators of up to 3 will be discussed. 
An illustrative example of Th. \ref{th:rep2} can be readily computed for $a=1/3$:
\begin{multline}
\wrighta{1/3}{b}{z}=\frac{ \hsym{0}{ 3}  \left( - ; b+\frac{2}{3},\frac{4}{3},\frac{5}{3}; \frac{ {z}^{3}}{27}\right)  {z}^{2}}{2  {\Gamma }\left( b+\frac{2}{3}\right) }+ \\ 
\frac{  \hsym{0}{ 3}  \left( - ,b+\frac{1}{3},\frac{2}{3},\frac{4}{3}; \frac{{z}^{3}}{27}\right)  z}{\Gamma \left( b+\frac{1}{3}\right) }+ 
\frac{ \hsym{0}{ 3}  \left( - ; b,\frac{1}{3},\frac{2}{3};\frac{{z}^{3}}{27}\right) }{ \Gamma (b)}
\end{multline}

\subsection{Representations for $a=-1/4$}\label{sec:rep4}
The Mainardi function \cite{Mainardi2010}
\[
M_a(z) =\wrighta{-a}{1-a}{-z}
\]
can be represented as the sum
\begin{multline}
M_{1/4}(z)= \frac{ \hsym{0}{ 2} \left( -; \frac{5}{4},\frac{3}{2}  ;-\frac{{{z}^{4}}}{256}\right)  {{z}^{2}}}{2  {\Gamma }\left( \frac{1}{4}\right) } \\
-\frac{\hsym{0}{ 2} \left( -;  \frac{3}{4}, \frac{5}{4} ;  -\frac{{{z}^{4}}}{256}\right)  z}{\sqrt{\pi }}+\frac{\hsym{0}{ 2}  \left( -;   \frac{1}{2},\frac{3}{4} ;  -\frac{{{z}^{4}}}{256}\right) }{ {\Gamma }\left( \frac{3}{4}\right) }
\end{multline}
For $b=3/4$
\[
\wrighta{-1/4}{3/4}{-z}=\frac{\sqrt{z}}{\sqrt{2} \pi } K_{1/4} \left( \frac{{z}^{2}}{8}\right)   {{e}^{-\frac{{z}^{2}}{8}}}
\]

\subsection{Representations for $a=-1/3$}\label{sec:rep3}
\begin{figure}[h]
	\centering
	\includegraphics[width=1.\columnwidth]{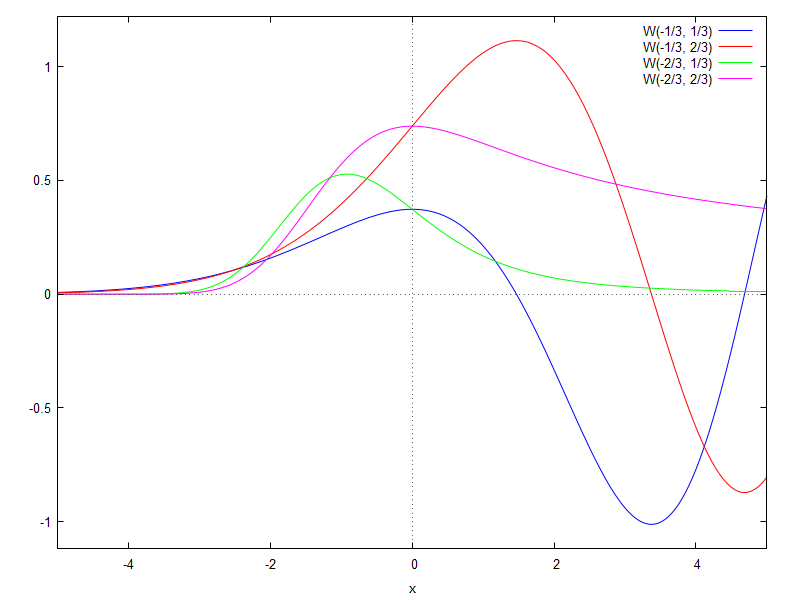}
	\caption{Plots of \wrighta{a}{b}{x} for $a=-1/3$ and $a=-2/3$}
	\label{fig:wright3}
\end{figure}
The general formula for $b \leq 1$ reads
\begin{multline}\label{eq:w3}
	\wrighta{-1/3}{b}{z}=\frac{\hsym{1}{2}
	\left(  \frac{5}{3}-b ; \frac{4}{3} , \frac{5}{3}  ;-\frac{{{z}^{3}}}{27}\right)  {{z}^{2}}}{2 {\Gamma}\left( b-\frac{2}{3}\right) }+ \\
\frac{\hsym{1}{2} \left(  \frac{4}{3}-b  ;  \frac{2}{3}, \frac{4}{3}  ;-\frac{{{z}^{3}}}{27}\right)  z}{{\Gamma}\left( b-\frac{1}{3}\right) }+\frac{\hsym{1}{2} \left(  1-b ; \frac{1}{3},\frac{2}{3} \operatorname{,}-\frac{{{z}^{3}}}{27}\right) }{{\Gamma}(b)}
\end{multline}
For $b=1/3$ the equation reduces to 
\begin{multline}
\wrighta{-1/3}{1/3}{-z}= 
-\frac{\left( I_{2/3}\left( \frac{2 {{z}^{\frac{3}{2}}}}{{{3}^{\frac{3}{2}}}}\right) -I_{ -2/3}\left( \frac{2 {{z}^{\frac{3}{2}}}}{{{3}^{\frac{3}{2}}}}\right) \right)  z}{3}
= \\
\frac{z}{\sqrt{3}\pi} K_{2/3} \left(\frac{2 {{z}^{\frac{3}{2}}}}{{{3}^{\frac{3}{2}}}} \right) = - \sqrt[3]{3} \ai^\prime \left(\frac{z}{\sqrt[3]{3}} \right) 
\end{multline}

Eq. \ref{eq:w3} simplifies as expected for $b=2/3$ to the Airy \ai function, which can be represented as a weighted sum of Bessel J or I functions, respectively.
For $z>0$
\begin{multline}
\wrighta{-1/3}{2/3}{-z}= \\
\frac{I_{ -1/3}\left(  \frac{2 {{z}^{\frac{3}{2}}}}{{{3}^{\frac{3}{2}}}}\right)  \sqrt{z}}{\sqrt{3}}-\frac{I_{ 1/3}\left(  \frac{2 {{z}^{\frac{3}{2}}}}{{{3}^{\frac{3}{2}}}}\right)  \sqrt{z}}{\sqrt{3}} = \\
\frac{K_{ 1/3}\left(  \frac{2 {{z}^{\frac{3}{2}}}}{{{3}^{\frac{3}{2}}}}\right)  \sqrt{z}}{\pi} = \sqrt[3]{3^2} \ai \left( z/ \sqrt[3]{3}\right)
\end{multline}

\subsection{Representations for $a=-1/2$}\label{sec:rep2}
For $b\leq 1$
\begin{multline}
	\wrighta{-1/2}{b}{z}=  \frac{ \hsym{1}{ 1} \left( \frac{3}{2}-b  ; \frac{3}{2} ; -\frac{{{z}^{2}}}{4}\right)  z}{ {\Gamma}\left( b-\frac{1}{2}\right) }+ \\
	\frac{\hsym{1}{ 1}  \left(   1-b ; \frac{1}{2} ;  -\frac{{{z}^{2}}}{4}\right) }{ \Gamma(b)} 
\end{multline}
In particular, for $b=1$ the above equation reduces to
\[
\wrighta{-1/2}{1}{z}= 1 + \erf{\frac{z}{2}} =\erfc{-\frac{z}{2}}
\] 
in accordance with the polynomial reduction.

The Gaussian derivatives can be represented as
\begin{equation}
\wrighta{- \frac{1}{2} }{\frac{1-n}{2}}{ z}=\left( \frac{d}{d z}\right)^n \frac{e^{-z^2/4}}{\sqrt{\pi}}  
\end{equation}
Plots are presented in Fig. \ref{fig:wright2}.
The anti-derivatives of the Gaussian kernel can be computed in a similar way using Th. \ref{th:rep3}.
For $b=7/2$
\begin{multline}
	\wrighta{-1/2}{7/2}{z}=\left( \frac{{{z}^{4}}}{60 \sqrt{\pi }}+\frac{3 {{z}^{2}}}{10 \sqrt{\pi }} +\frac{8}{15 \sqrt{\pi }}\right)  {{e}^{-\frac{{z}^{2}}{4}}} \\
	+\left( 1+ \erf {  \frac{z}{2}} \right)  \left( \frac{{z}^{5}}{120}+\frac{{z}^{3}}{6}+\frac{z}{2}\right)
\end{multline}

\begin{figure}[h!]
	\centering
	\includegraphics[width=1.\columnwidth]{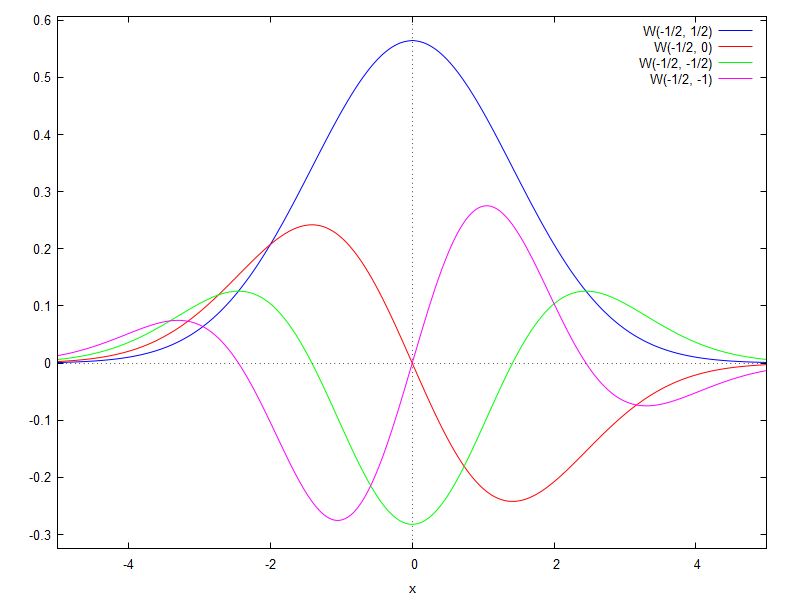}
	\caption{Plots of \wrighta{a}{b}{x} for $a=-1/2$}
	\label{fig:wright2}
\end{figure}

\subsection{Representations for $a=-2/3$}\label{sec:rep23}

The Mainardi function  for $a=2/3$ can be represented as the difference of two exponentially-weighted Bessel K functions on the entire real line:
\begin{multline}
	\wrighta{-2/3}{1/3}{-z}= \\
	\frac{K_{2/3 } \left( -\frac{2 {{z}^{3}}}{27}\right)  {{z}^{2}} {{e}^{-\frac{2 {{z}^{3}}}{27}}}}{{{3}^{\frac{3}{2}}} \pi }-\frac{K_{1/3} \left(  -\frac{2 {{z}^{3}}}{27}\right)  {{z}^{2}} {{e}^{-\frac{2 {{z}^{3}}}{27}}}}{{{3}^{\frac{3}{2}}} \pi } 
\end{multline}
On the other hand,
\[
\wrighta{-2/3}{1/3}{z}=-\frac{{{e}^{\frac{2 {{z}^{3}}}{27}}}\, \left( 3 \ai^\prime\left( \frac{{{z}^{2}}}{{{3}^{\frac{4}{3}}}}\right) +{\sqrt[3]{3} } z \ai\left( \frac{{{z}^{2}}}{{{3}^{\frac{4}{3}}}}\right) \right) }{{{3}^{\frac{2}{3}}}}
\]
in terms of the Airy function and its derivative. Plots are presented in Fig. \ref{fig:wright3}.

For $b=2/3$
\begin{multline}
\wrighta{-2/3}{2/3}{z}= \\
\frac{ K_{1/3 }\left(\frac{2 {{z}^{3}}}{27}\right)  z {{e}^{\frac{2 {{z}^{3}}}{27}}}}{\sqrt{3} \pi } 
=\sqrt[3]{9}\, {{e}^{\frac{2 {{z}^{3}}}{27}}}  \ai \left( \frac{{{z}^{2}}}{{{3}^{\frac{4}{3}}}}\right) 
\end{multline}
\subsection{Representations for $a=-1$}\label{sec:rep1}
This formula was recently derived in \cite{Apelblat2021} and is not anticipated in the previous literature since the parameter domain is customarily restricted to $a>-1$.
\begin{equation}
	\wrighta{-1}{b}{z}= \frac{1}{\Gamma(b)} \hsym{1}{0}(1-b;-; z) =  \frac{(z+1)^{b-1}}{\Gamma(b)}
\end{equation}

\section{Discussion}\label{sec:disc}
The main application of the present method is to provide ground truth for purely numerical algorithms for the evaluation of the Wright function. 
Such algorithms are a subject of continuous development \cite{Aceto2022, Luchko2008, Luchko2010}.
An advantage of the Wright function is that like the Fox H and Meijer G functions it is closed with regards to differentiation and integration. 
One can also envision application in definite integration to be incorporated into different CAS integration routines.

\section*{Acknowledgment}

The present work was funded by the European Union's Horizon Europe program under grant agreement VIBraTE 101086815.

\appendix
\subsection{Supporting results}\label{sec:support}
\begin{proposition}\label{prop:gammamult}
	For non negative integers $n, m$
	\[
	\Gamma (m n +b ) = \Gamma (m b)\; m^{m n} \prod_{j=0}^{m-1} \left(\frac{j}{m} +b \right)_n
	\]
\end{proposition}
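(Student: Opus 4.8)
The plan is to obtain this as a direct specialization of the Gauss multiplication theorem for the Gamma function, with the Pochhammer symbols appearing as telescoping quotients of Gamma values. First I would recall the multiplication formula $\Gamma(m\zeta) = (2\pi)^{(1-m)/2}\, m^{m\zeta - 1/2}\prod_{k=0}^{m-1}\Gamma\!\left(\zeta + \tfrac{k}{m}\right)$, valid whenever none of the arguments is a pole. Applying it once with $\zeta = n+b$ and once with $\zeta = b$ and dividing, the factors $(2\pi)^{(1-m)/2}$ cancel and the powers of $m$ combine to $m^{\,m(n+b) - 1/2 - (mb - 1/2)} = m^{mn}$, leaving
$$\frac{\Gamma\!\left(m(n+b)\right)}{\Gamma(mb)} = m^{mn}\prod_{k=0}^{m-1}\frac{\Gamma\!\left(n + b + \tfrac{k}{m}\right)}{\Gamma\!\left(b + \tfrac{k}{m}\right)}.$$

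The second step is to recognize each quotient as a rising factorial: for a non-negative integer $n$ one has $\Gamma(x+n)/\Gamma(x) = (x)_n$, with $(x)_0 = 1$. Taking $x = b + \tfrac{k}{m} = \tfrac{k}{m} + b$ turns the product into $\prod_{j=0}^{m-1}\left(\tfrac{j}{m}+b\right)_n$, which is exactly the asserted right-hand side; the $n=0$ instance degenerates to the tautology $\Gamma(mb) = \Gamma(mb)$, consistent with $(x)_0=1$. This already completes the argument.

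As a self-contained alternative that does not invoke the full multiplication theorem, I would induct on $n$. The base case $n = 0$ is immediate. For the inductive step, iterate the elementary functional equation $\Gamma(\zeta + m) = \Gamma(\zeta)\prod_{i=0}^{m-1}(\zeta + i)$ at $\zeta = m(n+b)$, use the factorization $m(n+b) + i = m\!\left(n + b + \tfrac{i}{m}\right)$ to extract $m^m$, and absorb the residual product $\prod_{i=0}^{m-1}\!\left(n + b + \tfrac{i}{m}\right)$ into the Pochhammer symbols via $\left(\tfrac{j}{m}+b\right)_{n+1} = \left(\tfrac{j}{m}+b\right)_n\!\left(\tfrac{j}{m}+b+n\right)$, together with $m^{m(n+1)} = m^{mn}\,m^m$.

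I do not expect a genuine obstacle here; the only care needed is bookkeeping of the index range $0 \le k \le m-1$ on both sides, verifying that the exponent of $m$ really is $m(n+b) - mb = mn$ and not a shifted value, and noting the domain restriction that $mb$ and the $b + \tfrac{k}{m}$ avoid the non-positive integers so no Gamma factor is singular — in the application to the First HG Representation the relevant arguments are positive, so this is automatic. I would also point out that for the identity to balance, the left side should read $\Gamma\!\left(m(n+b)\right) = \Gamma(mn + mb)$, which is precisely the form in which the proposition is used in Section~\ref{sec:hgrep} (there applied with $m \mapsto a$ and $b \mapsto b/a$, yielding $\Gamma(b+aj)$).
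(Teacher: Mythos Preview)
Your primary argument --- apply the Gauss multiplication formula at $\zeta=n+b$ and at $\zeta=b$, divide so that the $(2\pi)$-factor cancels and the power of $m$ collapses to $m^{mn}$, and then identify each Gamma quotient $\Gamma(n+b+k/m)/\Gamma(b+k/m)$ as the rising factorial $(b+k/m)_n$ --- is exactly the paper's own proof. Your supplementary induction on $n$ and your observation that the left-hand side must be read as $\Gamma(m(n+b))=\Gamma(mn+mb)$ (consistent with how the proposition is actually invoked in Section~\ref{sec:hgrep}) are correct extras not present in the original.
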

\begin{proof}
	Using the  Gauss-Legendre multiplication formula for the Gamma function
	\[
	\Gamma(m x) = \frac{m^{m x-1/2}} {(2 \pi)^{(m-1)/2}} \prod_{k=0}^{m-1} \Gamma\left(x + \frac{k}{m} \right) 
	\]
	Thus for non negative integer $n$ the formula can be expressed by a product of increasing factorials:
	\begin{multline*}
		\frac{\Gamma (m n +b )}{\Gamma (m b)} =m^{m n} \prod_{j=0}^{m-1} \frac{\Gamma\left(n + \frac{j}{m} +b \right)}{\Gamma \left(\frac{j}{m} +b \right) } = \\
		m^{m n} \prod_{j=0}^{m-1} \left(\frac{j}{m} +b \right)_n
	\end{multline*}
\end{proof}
We use the  closure under differentiation (eq. \ref{eq:wrightdiff}) to formulate
\begin{proposition}[Integral equation]\label{prop:inteq1}
	\begin{equation}\label{eq:wrightint}
		\wrighta{a}{b}{z} = \int_{0}^{z} \wrighta{a}{a+ b}{z} + \frac{1}{\Gamma(b)}
	\end{equation}
	since by definition $\wrighta{a}{b}{0}=1/ \Gamma(b)$. 
\end{proposition}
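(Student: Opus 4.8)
The plan is to obtain the identity directly from the differentiation rule \eqref{eq:wrightdiff} by a single integration, fixing the constant of integration by evaluation at $z=0$.

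First I would note that for $a>-1$ the defining series of $\wrighta{a}{b}{z}$ is entire in $z$: the reciprocal Gamma factors $1/\Gamma(ak+b)$ decay fast enough that this series, as well as the series for $\wrighta{a}{a+b}{z}$, converges uniformly on every compact subset of $\mathbb{C}$. This is exactly what legitimises the term-by-term differentiation behind \eqref{eq:wrightdiff} and the term-by-term integration I am about to perform. (When $a$ is a negative integer, the left-hand side is already a polynomial in $z$ by the construction in Section \ref{sec:poly}, and the asserted identity is then elementary.)

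Next I would integrate \eqref{eq:wrightdiff} along a path from $0$ to $z$. Since $\wrighta{a}{b}{\cdot}$ is entire the integral is path-independent, and the fundamental theorem of calculus gives
\[
\int_{0}^{z}\wrighta{a}{a+b}{t}\,dt \;=\; \wrighta{a}{b}{z}-\wrighta{a}{b}{0}.
\]
It then remains to identify $\wrighta{a}{b}{0}$. Setting $z=0$ in the Wright series annihilates every term with $k\ge 1$, leaving only the $k=0$ contribution $1/(0!\,\Gamma(b))=1/\Gamma(b)$. Substituting this value and rearranging yields precisely the claimed relation $\wrighta{a}{b}{z}=\int_{0}^{z}\wrighta{a}{a+b}{t}\,dt+1/\Gamma(b)$ (writing the dummy variable of integration as $t$ to disambiguate it from the upper limit).

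There is no genuinely hard step here; the only points deserving a line of care are the interchange of summation and integration, which is covered by the local uniform convergence noted above, and the observation that integrating from the fixed base point $0$ pins the additive constant to $\wrighta{a}{b}{0}=1/\Gamma(b)$, rather than leaving it arbitrary as in the indefinite-integration formula for the Wright function.
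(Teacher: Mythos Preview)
Your proof is correct and follows exactly the route the paper takes: integrate the differentiation rule \eqref{eq:wrightdiff} from $0$ to $z$ and fix the constant via $\wrighta{a}{b}{0}=1/\Gamma(b)$. The paper in fact gives no further detail than this, so your added remarks on uniform convergence only make the argument more careful.
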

\subsection{HG representation Maxima code}\label{sec:code}
\begin{lstlisting}
wrightsimp(wexpr):=block(
	 [ l, r, ret:wexpr, a, b, x,  z, mm, nn, lsta:[], lstb:[],  ss:0, j, 
	 listarith:true, gamma_expand:true, assume_pos:true ],
	 if freeof (wright, wexpr) then return (wexpr),
	 if inop(wexpr)="+" then 
	 	ret:map(lambda([z],  wrightsimp(z)), wexpr)
	 elseif inop(wexpr)='wright then (	
		 [a, b, x]:args( wexpr),
		 if a=0 then
		 	if  integerp(b) and  b <=0  then return(0) 
		 else 
		 	ret:exp(x)/gamma(b),
		 if rationalp(a) and a>0 then (
			 nn: num(a), mm:denom (a),
			 for r:0 thru mm-1 do ( 
				 lsta: makelist(r/mm +(b+j)/nn, j, 0, nn-1),
				 lstb: makelist((r+1+j)/mm, j, 0, mm-1),
				 ss: x^r/r! /gamma(a*r+b)* hgfred([1], append(lsta, lstb), x^mm/ mm^mm/ nn^nn)+ss
			 ),
			 ret:ss
		 ),
		 if rationalp(a) and a<0 then (
			 nn:abs(num(a)), mm:denom (a),
			 for r:0 thru mm-1 do ( 
				 /* we use the Gamma reflection formula */
				 lsta: makelist(1+r/mm -(b+j)/nn, j, 0, nn-1),
				 lstb: makelist((r+1+j)/mm, j, 0, mm-1),
				 if  not integerp(a*r+b) then
				 	ss: x^r/r! /gamma(a*r+b)* hgfred( cons(1, lsta), lstb, (-1)^nn*x^mm/ mm^mm *nn^nn)+ss
				 ),
				 if b>=1 then (
				 /* polynomial component */
				 ss:ss + mdx3(x, b, abs(a))
			 ),
			 ret:ss 
		 )		 
	) else (
	 [l, r]: partbyvar(wexpr, wright),
	 l:subst(nil=1, l),
	 ret:l * wrightsimp(r)
		 ),
	ret
);
	 
mdx3(x, n, q):=block([ s:0, xi], 
	if n<=0 then return(0),
	if n=1 then return(1) 
	else (
		if integerp(n) then s:1/gamma(n), 
		s:integrate(mdx3(xi, n-q,q) ,xi, 0, x)+s, 
		expand(s)
	)
);
	 
	 
\end{lstlisting}
\bibliographystyle{IEEEtran}
\bibliography{specfunct}

\end{document}